\newtheorem{thm}{Theorem}[section]
 \newtheorem{prop}[thm]{Proposition}
 \theoremstyle{definition}
 \theoremstyle{remark}
 \numberwithin{equation}{section}
\keywords{Almost paracontact structure, almost paracomplex structure, Riemannian metric, hypersphere, space form, constant sectional curvature}
\subjclass[2010]{Primary 53C15, 53C45; Secondary 53D15}
\newcommand{\ie}{i.e.}
\newcommand{\f}{\phi}
\newcommand{\n}{\nabla}
\newcommand{\M}{(\mathcal{M},\allowbreak{}\f,\allowbreak{}\xi,\allowbreak{}\eta,g)}
\newcommand{\I}{\mathcal{I}}
\newcommand{\R}{\mathbb R}
\newcommand{\E}{\mathbb E}
\newcommand{\X}{\mathfrak X}
\newcommand{\F}{\mathcal{F}}
\newcommand{\MM}{\mathcal{M}}
\newcommand{\LL}{\mathfrak{L}}
\newcommand{\hatN}{\widehat{N}}
\newcommand{\tr}{{\rm tr}}
\newcommand{\ta}{\theta}
\newcommand{\om}{\omega}
\newcommand{\lm}{\lambda}
\newcommand{\sgn}{\mathrm{sgn}}
\newcommand{\al}{\alpha}
\newcommand{\ow}{\owedge}
\newcommand{\D}{\mathrm{d}}
\title
[Hyperspheres in Euclidean and Minkowski 4-spaces \ldots ]
{Hyperspheres in Euclidean and Minkowski 4-spaces
as almost paracontact almost paracomplex Riemannian manifolds}
\author[M. Manev]{Mancho  Manev}
\author[V. Tavkova]{Veselina  Tavkova}
\address[MM1, VT]{Department of Algebra and Geometry, Faculty of Mathematics and Informatics,
University of Plovdiv Paisii Hilendarski, 24, Tzar Asen St, 4000 Plovdiv,
Bulgaria}
\email{mmanev@uni-plovdiv.bg}
\email{vtavkova@uni-plovdiv.bg}
\address[MM2]{
Department of Medical Informatics, Biostatistics and E-Learning,
Faculty of Public Health, Medical University of Plovdiv,
15A, Vasil Aprilov Blvd, 4002 Plovdiv,
Bulgaria
}
\begin{document}

%

\begin{abstract}
Almost paracontact almost paracomplex Riemannian manifolds of the lowest dimension are studied.
Such structures are constructed on hyperspheres in 4-dimensional spaces, Euclidean and pseudo-Euclidean, respectively.
The obtained manifolds are  studied and characterised in terms of the classification used and their geometric  properties.
\end{abstract}

\maketitle

\section{\textbf{Introduction}}\label{sec-intro}
 \vglue-10pt
 \indent

In \cite{Sato76}, I. Sato introduced the notion of \emph{almost paracontact Riemannian structure}
on a differentiable manifold of arbitrary dimension so that this structure is compatible with a Riemannian metric such that the metric preserves the structure endomorphism on the paracontact distribution.
Later, other geometers (K. Matsumoto, T. Adati, T. Miyazawa, S. Sasaki) joined Sato in the initial development of the differential geometry of almost paracontact Riemannian manifolds (e.g. \cite{AdatMiya77, Sa80}).

Another type of structure-metric compatibility is known in addition to the above.
If the structure endomorphism induces an anti-isometry with respect to the metric on the paracontact distribution of each tangent fibre,
then it is said that the manifold has an \emph{almost paracontact metric structure} (see, e.g., \cite{KanKon, ZamNak}).


The restriction of the almost paracontact structure on the paracontact distribution is an almost product structure.
In \cite{Nav}, A.M. Naveira  gives a classification of Riemannian almost product
manifolds  with respect to the covariant derivative of the
almost product structure regarding the Levi-Civita connection of the Riemannian metric.

Almost paracontact Riemannian manifolds of Sasaki type $(n, n)$
are classified in \cite{ManSta01}.
For them, the induced almost product structure
on the paracontact distribution is traceless and it is called an \emph{almost paracomplex structure} (\cite{Lib52}).
These manifolds are necessarily odd-dimensional and
are called
\emph{almost paracontact almost paracomplex Riemannian manifolds} in \cite{ManVes18}.

An object of particular interest in our research is the case of the lowest dimension (which is three) of
almost paracontact almost paracomplex Riemannian manifolds. In this regard,
we study their properties in \cite{ManVes18, ManVes2, ManVes3}.

In the present work, we use two different approaches to construct
an almost paracontact almost paracomplex Riemannian manifold
on a hypersphere.
The first case is of a hypersphere in Euclidean space $\E^4$
and the second is of a time-like hypersphere in pseudo-Euclidean space
$\E^4_1$ (i.e. Minkowski space).
Similar research is made for almost contact B-metric hyperspheres in \cite{GaMiGr,HMan}.

The purpose of this paper
is to study the basic geometric characteristics of the considered manifolds.
The obtained  results will provide
explicit examples of the lowest dimension of the manifolds
under study and will contribute to the understanding of
their geometry.

The  paper is organized as follows.
In Sect.~\ref{sec-mfds}, we recall some necessary basic definitions and properties for the studied manifolds.
In Sect.~\ref{sec-R^4} and Sect.~\ref{sec-R^4_1}, we construct and  characterize such manifolds on hyperspheres in $\E^4$ and
$\E^4_1$, respectively.


 \section{\textbf{Almost paracontact almost paracomplex Riemannian manifolds}}\label{sec-mfds}
 \vglue-10pt
 \indent


Let us consider an \emph{almost paracontact almost paracomplex Riemannian manifold} $(\MM,\f,\allowbreak{}\xi,\eta, g)$,
i.e. $\MM$ is a real differentiable manifold of dimension $(2n+1)$ equipped with an \emph{almost
paracontact  almost paracomplex structure} $(\f,\xi,\eta)$ and a Riemannian metric $g$.
Namely,
$\f$ is a tensor field of type $(1,1)$ (known as a paracontact endomorphism) of the tangent bundle $T\MM$ of $\MM$,  $\xi$ is a Reeb vector field and  $\eta$ is its dual 1-form, which together with $g$ satisfy the following conditions:
 \cite{Sato76}, \cite{ManSta01}
\begin{equation*}\label{str}
\begin{array}{c}
\f^2 = \I - \eta \otimes \xi,\quad \eta(\xi)=1,\quad
\eta\circ\f=0,\quad \f\xi = 0,\quad \tr\f = 0,\\[4pt]
g(\f x, \f y) = g(x,y) - \eta(x)\eta(y),
\end{array}
\end{equation*}
where $\I$ denotes the identity on $T\MM$.

Here and further $x$, $y$, $z$, $w$ will stand for arbitrary
elements of the Lie algebra $\X(\MM)$ of tangent vector fields on $\MM$ or vectors in the tangent space $T_p\MM$ at $p\in \MM$.

Let us denote the  Levi-Civita connection of $g$ by $\nabla $.
The fundamental tensor  $F$ of type (0,3) on  $(\MM,\f,\xi,\eta,g)$   is defined by
\begin{equation*}\label{F=nfi}
F(x,y,z)=g\bigl( \left( \nabla_x \f \right)y,z\bigr).
\end{equation*}
It has the following basic properties with respect to the structure
\begin{equation*}\label{F-prop}
\begin{array}{l}
F(x,y,z)=F(x,z,y)\\[4pt]
\phantom{F(x,y,z)}=-F(x,\f y,\f z)+\eta(y)F(x,\xi,z)
+\eta(z)F(x,y,\xi).
\end{array}
\end{equation*}
The relations of $\nabla \xi$ and $\nabla \eta$ with $F$ are as follows:
\begin{equation}\label{n_eta_F}
(\nabla_x \eta)(y)=g\left( \nabla_x \xi, y \right)=-F(x,\f y,\xi).
\end{equation}

Let $\left\{\xi;e_i\right\}$ $(i=1,2,\dots,2n)$ be a basis of
$T_p\MM$ at an arbitrary point $p\in \MM$ and $g^{ij}$  are the components of the inverse
matrix of $g$. Using this basis, the structure $(\f,\xi,\eta)$ and the metric $g$, the following 1-forms (known as Lee forms)
are associated with $F$:
\begin{equation*}\label{t}
\theta(z)=g^{ij}F(e_i,e_j,z),\quad
\theta^*(z)=g^{ij}F(e_i,\f e_j,z), \quad \omega(z)=F(\xi,\xi,z).
\end{equation*}

In \cite{ManSta01}, a classification of almost paracontact almost paracomplex
Riemannian manifolds is made with respect to basic properties of $F$ with respect to the tensor structure of the studied manifold. 
This classification consists of 11 basic
classes $\F_1$, $\F_2$, $\dots$, $\F_{11}$.
Furthermore,
the components $F^{s}$ $(s\in\{1,2,\dots,11\})$ of $F$, which correspond to the classes $\F_{s}$, are determined in \cite{ManVes18}.
The latter approach provides an alternative way to determine the basic classes of the considered classification.
Namely, the manifold $\M$ belongs to $\F_{s}$
if and only if the equality $F=F^s$ is valid.
As a corollary we have the following. A manifold of the studied type
belongs to a direct sum of two or more basic classes, i.e.
$\M\in\F_i\oplus\F_j\oplus\cdots$, if and only if the
tensor $F$ on $\M$ is the sum of the corresponding components
$F^i$, $F^j$, $\ldots$ of $F$, i.e. the following condition is
satisfied $F=F^i+F^j+\cdots$.

%

Let  $(\MM,\f,\xi,\eta,g)$ have the lowest dimension
(i.e. $\dim{\MM}=3$)  and let the set of vectors
$\{e_0, e_1, e_2\}$ be a $\f$-basis of $T_p\MM$ which satisfies the following
conditions:
\begin{equation}\label{strL}
\begin{array}{l}
\f e_0=0,\quad \f e_1=e_{2},\quad \f e_{2}= e_1,\quad \xi=
e_0,\quad \\[4pt]
\eta(e_0)=1,\quad \eta(e_1)=\eta(e_{2})=0,
\end{array}
\end{equation}
\begin{equation}\label{gL}
  g(e_i,e_j)=\delta_{ij},\qquad i,j\in\{0,1,2\}.
\end{equation}

According to \cite{ManVes18}, the components ${F_{ijk}=F(e_i,e_j,e_k)}$, ${\ta_k=\ta(e_k)}$, ${\ta^*_k=\ta^*(e_k)}$ and ${\om_k=\om(e_k)}$ of $F$, $\ta$, $\ta^*$ and $\om$, respectively,  with respect to the $\f$-basis $\left\{e_0,e_1,e_2\right\}$ are determined as  follows:
\begin{equation*}\label{t3}
\begin{array}{c}
	\begin{array}{ll}
		\ta_0=F_{110}+F_{220},\quad & \ta_1=F_{111}=-F_{122}=-\ta^*_2,\\[4pt]
		\ta^*_0=F_{120}+F_{210}, \quad &\ta_2=F_{222}=-F_{211}=-\ta^*_1,\\[4pt]
	\end{array}\\
	\begin{array}{lll}
		\om_0=0,  \qquad & \om_1=F_{001},\qquad & \om_2=F_{002}.
	\end{array}
\end{array}
\end{equation*}
Hence, the components $F^s$, $s\in\{1,2,\dots, 11\}$, of $F$ on $(\MM,\f,\xi,\eta,g)$  in  the corresponding basic classes $\F_s$ have the following form: \cite{ManVes18}
\begin{equation}\label{Fi3}
\begin{array}{l}
F^{1}(x,y,z)=\left(x^1\ta_1-x^2\ta_2\right)\left(y^1z^1-y^2z^2\right); \\[4pt]
F^{2}(x,y,z)=F^{3}(x,y,z)=0;
\\
F^{4}(x,y,z)=\frac{\ta_0}{2}\Bigl\{x^1\left(y^0z^1+y^1z^0\right)
+x^2\left(y^0z^2+y^2z^0\right)\bigr\};\\[4pt]
F^{5}(x,y,z)=\frac{\ta^*_0}{2}\bigl\{x^1\left(y^0z^2+y^2z^0\right)
+x^2\left(y^0z^1+y^1z^0\right)\bigr\};\\[4pt]
F^{6}(x,y,z)=F^{7}(x,y,z)=0;\\[4pt]
F^{8}(x,y,z)=\lm\bigl\{x^1\left(y^0z^1+y^1z^0\right)
-x^2\left(y^0z^2+y^2z^0\right)\bigr\},\\[4pt]
\hspace{38pt} \lm=F_{110}=-F_{220}
;\\[4pt]
F^{9}(x,y,z)=\mu\bigl\{x^1\left(y^0z^2+y^2z^0\right)
-x^2\left(y^0z^1+y^1z^0\right)\bigr\},\\[4pt]
\hspace{38pt} \mu=F_{120}=-F_{210}
;\\[4pt]
F^{10}(x,y,z)=\nu x^0\left(y^1z^1-y^2z^2\right),\quad
\nu=F_{011}=-F_{022}
;\\[4pt]
F_{11}(x,y,z)=x^0\bigl\{\om_{1}\left(y^0z^1+y^1z^0\right)
+\om_{2}\left(y^0z^2+y^2z^0\right)\bigr\},
\end{array}
\end{equation}
where the decompositions $x=x^ie_i$, $y=y^ie_i$, $z=z^ie_i$
with respect to $\left\{e_0,e_1,e_2\right\}$ are used.

By virtue of \eqref{Fi3}, it is determined in \cite{ManVes18} that the studied 3-dimensional manifolds can belong only to
the basic classes
$\F_1$, $ \F_4$, $\F_5$,  $\F_8$, $\F_9$, $\F_{10}$,  $\F_{11}$ and their direct sums.

The Nijenhuis tensor $N$ of the structure $(\f,\xi,\eta)$ is defined by the equality
$N(x,y) = [\f,\f](x,y) - \D\eta(x,y) \xi$,
where the Nijenhuis torsion of $\f$ is determined by
$[\f,\f](x,y)=[\f x,\f y]+\f^2[x,y]-\f[\f x,y]-\f[x,\f y]$
and $\D\eta$ is the exterior derivative of $\eta$ given by
$\D\eta (x,y)=(\nabla _x\eta)y-(\nabla _y\eta)x$.
The corresponding tensor of type (0,3) of the Nijenhuis tensor on $\M$ is defined by the
equality  $N(x,y,z)=g\left(N(x,y),z\right)$.
According to \cite{ManVes18},  we express $N$ in terms of $F$ as follows:
\begin{equation}\label{N}
\begin{array}{ll}
N(x,y,z)=F(\f x,y,z)-F(\f y,x,z)-F(x,y,\f z)+F(y,x,\f z)\\[4pt]
\phantom{N(x,y,z)=}+\eta(z)\left\{F(x,\f y,\xi)-F(y,\f
x,\xi)\right\}.
\end{array}
\end{equation}
The associated Nijenhuis tensor  $\hatN$ of the structure
$(\f,\xi,\eta,g)$ is defined by the following way
$\hatN(x,y) = \{\f,\f\}(x,y) {-} ( \mathcal \LL_{\xi}g)(x,y) \xi$.
In the latter equality, $\{\f ,\f\}$ is the symmetric tensor of type $(1,2)$ determined by
$\{\f ,\f\}(x,y)=\{\f x,\f y\}+\f^2\{x,y\}-\f\{\f x,y\}-\f\{x,\f y\}$
for $\{x,y\} = \nabla_x y +  \nabla_y x$ and $\LL_{\xi}g$ is  the Lie derivative  of $g$ along $\xi$
expressed by $\left(\LL_{\xi}g\right)(x,y)=(\nabla _x\eta)y + (\nabla _y\eta)x$.
The corresponding tensor of type (0,3) of the associated Nijenhuis tensor is defined by
$\hatN(x,y,z)=g\left(\hatN(x,y),z\right)$. In \cite{ManVes18}, we express $\hatN$  by  $F$ as
follows:
\begin{equation}\label{HN}
\begin{array}{ll}
\hatN(x,y,z)=F(\f x,y,z)+F(\f y,x,z)-F(x,y,\f z)-F(y,x,\f z)\\[4pt]
\phantom{\hatN(x,y,z)=}
+\eta(z)\left\{F(x,\f y,\xi)+F(y,\f x,\xi)\right\}.
\end{array}
\end{equation}

The curvature tensor $R$ of type $(1,3)$ for $\nabla$ is defined as usually by $R=\left[\n,\n\right]-\n_{[\ ,\ ]}$.
The corresponding $(0,4)$-tensor is denoted by the same letter and it is given  by  $R(x,y,z,w)=g(R(x,y)z,w)$.

The Ricci tensor $\rho$ and the scalar curvature $\tau$ for $R$ as well as
their associated quantities are determined respectively by:
\begin{equation}\label{rhotau}
\begin{array}{ll}
    \rho(y,z)=g^{ij}R(e_i,y,z,e_j),\qquad &
    \tau=g^{ij}\rho(e_i,e_j),\\[4pt]
    \rho^*(y,z)=g^{ij}R(e_i,y,z,\f e_j),\qquad &
    \tau^*=g^{ij}\rho^*(e_i,e_j).
\end{array}
\end{equation}

Moreover, we use the Kulkarni-Nomizu
product $g\ow  h$ of two $(0,2)$-tensors $g$ and $h$ defined by
\[
\begin{array}{l}
\left(g\ow h\right)(x,y,z,w)=g(x,z)h(y,w)-g(y,z)h(x,w)\\[4pt]
\phantom{\left(g\ow h\right)(x,y,z,w)}
+g(y,w)h(x,z)-g(x,w)h(y,z).
\end{array}
\]
Obviously,   $g\ow h$ has the basic properties of $R$
if and only if $g$ and $h$ are symmetric.

Let $\al$ be a non-degenerate 2-plane in $T_p\MM$, $p \in \MM$, having a basis  $\{x,y\}$.
The sectional curvature $k(\al;p)$  is determined by
\begin{equation}\label{sect}
k(\al;p)=-\frac{2R(x,y,y,x)}{(g\ow g)(x,y,y,x)}.
\end{equation}



\section{\textbf{A hypersphere with the studied structure in Euclidean 4-space}}\label{sec-R^4}
 \vglue-10pt
 \indent

 Let $\E^{4}$ be the Euclidean space $\left(\R^4,\langle\cdot,\cdot\rangle\right)$, where $\langle\cdot,\cdot\rangle$ is the usual Euclidean inner product determined by
\begin{equation*}
  \langle x,y\rangle=x^1y^1+x^2y^2+x^3y^3+x^4y^4
\end{equation*}
for $x(x^1,x^2,x^3,x^4)$, $y(y^1,y^2,y^3,y^4)$ from $\R^{4}$.

Then, we consider a hypersphere $S_1$ in $\E^{4}$ at the origin with a real radius $r$ identifying an arbitrary point $p$ in $\E^{4}$ with its position vector $z$, \ie
\begin{equation}\label{defS1}
S_1:\quad \langle z,z\rangle=r^2.
\end{equation}
It has the following parametrization
\begin{equation*}
{z}(r\cos u^1\cos u^2, r\cos u^1\sin u^2, r\sin u^1\cos u^0,  r\sin u^1\sin u^0),
\end{equation*}
where $u^0,u^1,u^2$ are real parameters such as $u^0, u^1, u^2 \in [0; 2\pi)$, $u^1\neq\frac{k\pi}{2}$ for $k \in \{0,1,2,3\}$.
Consequently, the local basic vectors $\partial_i=\frac{\partial z}{\partial{u^i}}$, $i \in \{0,1,2\}$
have the following inner products:
\begin{equation*}
\begin{array}{l}
 \langle\partial_0,\partial_0\rangle=r^2 \sin^2u^1, \quad
   \langle\partial_1,\partial_1\rangle=r^2, \quad
  \langle\partial_2,\partial_2\rangle=r^2 \cos^2u^1, \quad\\[4pt]
  \langle\partial_i,\partial_j\rangle=0, \; i\neq j.
\end{array}
\end{equation*}
Substituting $e_i=\frac{1}{\sqrt{\langle\partial_i,\partial_i\rangle}}\partial_i$, $i \in \{0,1,2\}$, we obtain an orthonormal basis $\{e_i\}$, $i \in \{0,1,2\}$ as follows
\begin{equation}\label{S1-eidi}
\begin{array}{l}
{e_0}=\frac{\varepsilon_2}{r\sin u^1}\partial_0, \qquad
{e_1}=\frac{1}{r}\partial_1, \qquad
{e_2}=\frac{\varepsilon_1}{r\cos u^1}\partial_2,
\end{array}
\end{equation}
where $\varepsilon_1=\sgn (\cos u^1)$, $\varepsilon_2=\sgn (\sin u^1)$.

Next, we introduce an almost paracontact almost paracomplex structure $(\f,\xi,\eta)$ on
$S_1$ determined as shown  in \eqref{strL}.
The metric $g$ on the hypersurface is the
restriction of $\langle\cdot,\cdot\rangle$ on $S_1$.
Therefore, $\{e_i\}$, $i \in \{0,1,2\}$ is an orthonormal $\f$-basis with respect to $g$ on  $T_pS_1$ at $p \in S_1$, i.e.  \eqref{gL} is satisfied.
Thus, we obtain for $(S_1,\f,\xi,\eta,g)$ the following
\begin{prop}
The manifold $(S_1,\f,\xi,\eta,g)$ is a 3-dimensional almost paracontact almost paracomplex Riemannian manifold.
\end{prop}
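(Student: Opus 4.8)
The plan is to verify directly that the quintuple $(S_1,\f,\xi,\eta,g)$ satisfies the defining axioms of a 3-dimensional almost paracontact almost paracomplex Riemannian manifold, as collected in \eqref{strL} and \eqref{gL} together with the compatibility conditions for the structure $(\f,\xi,\eta,g)$. Since $S_1$ is a hypersurface of $\E^4$ cut out by \eqref{defS1}, it is a smooth real $3$-manifold, so the dimension requirement $\dim S_1 = 2n+1$ with $n=1$ is immediate.

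First I would confirm that $\{e_0,e_1,e_2\}$ defined in \eqref{S1-eidi} is a genuine orthonormal frame on $T_pS_1$. This amounts to observing that the listed inner products $\langle\partial_i,\partial_j\rangle$ are mutually orthogonal with the stated positive norms, so the rescaling by $\frac{1}{\sqrt{\langle\partial_i,\partial_i\rangle}}$ (absorbing the sign factors $\varepsilon_1,\varepsilon_2$) produces vectors satisfying $g(e_i,e_j)=\delta_{ij}$; this is exactly \eqref{gL}. The only subtlety is that the chart is defined away from $u^1 = \tfrac{k\pi}{2}$ precisely so that $\sin u^1$ and $\cos u^1$ do not vanish and the normalization is legitimate, and the sign functions $\varepsilon_1,\varepsilon_2$ guarantee a consistently oriented frame across the sign changes of $\cos u^1$ and $\sin u^1$.

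Next I would define $(\f,\xi,\eta)$ on this frame by the prescription \eqref{strL}, namely $\f e_0 = 0$, $\f e_1 = e_2$, $\f e_2 = e_1$, $\xi = e_0$, $\eta(e_0)=1$, $\eta(e_1)=\eta(e_2)=0$, and extend $\f$ and $\eta$ tensorially. With these assignments the algebraic identities defining the structure follow by a routine check on basis vectors: $\f^2 = \I - \eta\otimes\xi$ holds since $\f^2 e_1 = e_1$, $\f^2 e_2 = e_2$ and $\f^2 e_0 = 0 = (\I - \eta\otimes\xi)e_0$; the identities $\eta(\xi)=1$, $\eta\circ\f=0$, $\f\xi=0$, and $\tr\f=0$ are read off directly; and the compatibility $g(\f x,\f y)=g(x,y)-\eta(x)\eta(y)$ is verified on the $\f$-basis using \eqref{gL}, since $\f$ interchanges $e_1,e_2$ (preserving lengths) and annihilates $e_0$.

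The main point, and the only place where the geometry of the hypersphere rather than mere linear algebra enters, is confirming that $\f$, $\xi$, $\eta$ and $g$ are globally well-defined smooth fields on the relevant coordinate neighborhood. Since $g$ is defined as the restriction of $\langle\cdot,\cdot\rangle$ to $S_1$, its smoothness is inherited from the ambient metric, and the frame \eqref{S1-eidi} depends smoothly on $u^0,u^1,u^2$ wherever the normalization is valid. I would therefore expect the proof to be a short, essentially computational verification: the substantive content is the correct identification of the orthonormal $\f$-basis in \eqref{S1-eidi}, after which all structural axioms reduce to evaluating the defining relations \eqref{strL}--\eqref{gL} on basis vectors, with no genuine obstacle beyond bookkeeping.
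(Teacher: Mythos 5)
Your proposal is correct and follows essentially the same route as the paper: the paper's justification is precisely the preceding construction of the orthonormal frame \eqref{S1-eidi} from the parametrization, the definition of $(\f,\xi,\eta)$ on that frame via \eqref{strL}, and the observation that the restricted metric satisfies \eqref{gL}, after which the structural axioms are an immediate basis-vector check. Your additional remarks on the domain restriction $u^1\neq\tfrac{k\pi}{2}$ and the sign factors $\varepsilon_1,\varepsilon_2$ only make explicit what the paper leaves implicit.
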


Using \eqref{S1-eidi}, we calculate the following commutators of the basic vectors $e_i$:
\begin{equation}\label{S1-com}
\begin{array}{l}
 [e_0,e_1]=\frac{1}{r}\cot u^1 {e_0}, \qquad [e_0,e_2]=0, \qquad [e_1,e_2]=\frac{1}{r}\tan u^1 {e_2}.
\end{array}
\end{equation}
According to the latter equations and the Koszul equality for $\n$ of $g$, \ie
\begin{equation}\label{Koz}
2g\left(\n_{E_i}E_j,E_k\right)
=g\left([E_i,E_j],E_k\right)+g\left([E_k,E_i],E_j\right)
+g\left([E_k,E_j],E_i\right),
\end {equation}
we obtain  the components of the  covariant derivatives of $e_i$ with respect to $\n$:
\begin{equation}\label{S1-nij}
\begin{array}{ll}
\n_{e_0}e_0=-\frac{1}{r}\cot u^1 {e_1}, \qquad &\n_{e_0}e_1=\frac{1}{r}\cot u^1 {e_0},\\[4pt]
\n_{e_2}e_1=-\frac{1}{r}\tan u^1 {e_2}, \qquad &\n_{e_2}e_2=\frac{1}{r}\tan u^1 {e_1}
\end{array}
\end{equation}
and the remaining $\n_{e_i}e_j$ are zero.


Bearing in mind \eqref{strL}, \eqref{gL} and \eqref{S1-nij}, we obtain the following components $F_{ijk}$ of $F$ with respect to the basis $\{e_i\}$, $i \in \{0,1,2\}$:
\begin{equation}\label{S1-Fijk}
  F_{002}=F_{020}=\frac{1}{r}\cot u^1, \qquad   F_{211}=-F_{222}=\frac{2}{r}\tan u^1
\end{equation}
and the other components $F_{ijk}$ are zero.

According to  \eqref{N}, \eqref{HN} and \eqref{S1-Fijk}, we determine the  basic components $N_{ijk}=N(e_i,e_j,e_k)$ of the Nijenhuis tensor and $\widehat{N}_{ijk}=\widehat{N}(e_i,e_j,e_k)$ of  its associated tensor.
The non-zero of them are:
\begin{equation*}
\begin{array}{l}
N_{010}=-N_{100}=\frac{1}{r}\cot u^1, \\[4pt]
\widehat{N}_{221}=\widehat{N}_{111}=-\widehat{N}_{122}=-\widehat{N}_{212}=\frac{4}{r}\tan u^1, \\[4pt]
\widehat{N}_{001}=-\frac{2}{r}\cot u^1, \qquad \widehat{N}_{010}=\widehat{N}_{100}=\frac{1}{r}\cot u^1.
\end{array}
\end{equation*}

Using \eqref{Fi3} and \eqref{S1-Fijk}, we get the equality
\begin{equation*}\label{Fexa}
 F(x,y,z)=\frac{2}{r}\tan u^1x^2\left(y^1z^1-y^2z^2\right)+\frac{1}{r}\cot u^1 x^0\left(y^0z^2+y^2z^0\right).
\end{equation*}
By virtue of  the latter equality, we establish that $F$ has  the following form:
\begin{equation}\label{S1-F1+F11}
F(x,y,z)=F^1(x,y,z)+F^{11}(x,y,z),
\end{equation}
where $F^1$ and $F^{11}$ are the components of $F$ for the basic classes $\F_1$ and $\F_{11}$, respectively.
Therefore, we have  the following non-zero components of $F^1$ and $F^{11}$ with respect to $\{e_i\}$, $i \in \{0,1,2\}$:
\begin{equation}\label{S1-Fijk59}
\begin{array}{l}
F^1_{211}=-F^1_{222}=-\ta_2=\frac{2}{r}\tan u^1,\\[4pt]
F^{11}_{002}=F^{11}_{020}=\om_2=\frac{1}{r}\cot u^1.
\end{array}
\end{equation}
Let us note that  the  components of $F^1$ and $F^{11}$ from the above  are non-zero for all values of $u^1$ in its domain.

Next, using  \eqref{n_eta_F},  \eqref{S1-F1+F11} and  \eqref{S1-Fijk59},  we find the following:
\begin{equation*}\label{S1-d-eta}
N=-\D\eta \otimes \xi, \qquad \n_\xi \xi\neq 0,
\end{equation*}
which support the obtained results in \cite{ManVes18}.

Bearing in mind  \eqref{gL}, \eqref{S1-com}, \eqref{S1-nij} and the definition equality of $R$,  we obtain  the components $R_{ijk\ell}=R(e_i,e_j,e_k,e_\ell)$ of $R$ with respect to $\{e_i\}$, $i \in \{0,1,2\}$. The first of them are:
\begin{equation}\label{S1-Rijkl}
  R_{0101}=R_{0202}=R_{1212}=-\frac{1}{r^2}.
\end{equation}
The rest of the non-zero components of $R$ are determined by \eqref{S1-Rijkl}
and the basic symmetries of $R$ and its first Bianchi identity.

According to \eqref{gL}, \eqref{rhotau} and \eqref{S1-Rijkl}, we obtain  the  components $\rho_{jk}=\rho(e_j,e_k)$ and $\rho^*_{jk}=\rho^*(e_j,e_k)$ of the
Ricci tensor $\rho$ and  the $*$-Ricci tensor  $\rho^*$, respectively, as well as the values
of the scalar curvature $\tau$ and its associated quantity $\tau^*$ as follows:
\begin{equation}\label{S1-rhotau}
\begin{array}{l}
\rho_{00}=\rho_{11}=\rho_{22}=\frac{2}{r^2}, \quad \rho^{*}_{12}=\rho^{*}_{21}=-\frac{1}{r^2},\\[4pt]
\tau=\frac{6}{r^2}, \qquad \tau^{*}=0.
\end{array}
\end{equation}
Futhermore, from  \eqref{gL}, \eqref{sect} and \eqref{S1-Rijkl}, we get the basic sectional curvatures $k_{ij}=k(e_i,e_j)$ determined by the basis $\{e_i,e_j\}$ of the corresponding 2-plane:
\begin{equation}\label{S1-kij}
k_{01}=k_{02}=k_{12}=\frac{1}{r^2}.
\end{equation}

Taking into account \eqref{gL}, \eqref{S1-Rijkl} and \eqref{S1-kij}, we get the  form of the curvature tensor as follows
\begin{equation}\label{Rpi1}
R(x,y,z,w)
=-\frac{1}{2r^2}(g\ow g)(x,y,z,w).
\end{equation}

According to the obtained results from the above, we have the following
\begin{thm}
Let $(S_1,\f,\xi,\eta,g)$ be the  hypersphere in the  Euclidean 4-space $\E^{4}$
equipped with an almost paracontact almost paracomplex structure and a Riemannian metric
 defined by \eqref{defS1}, \eqref{strL} and \eqref{gL}.
Then, the manifold $(S_1,\f,\xi,\eta,g)$  has the following properties:
\begin{enumerate}
\item
it is in the class $\F_1\oplus\F_{11}$  but does not belong to either $\F_1$ or $\F_{11}$;
\item
it has a positive scalar curvature;
\item
it is $*$-scalar flat;
    \item
it is a space-form of positive constant sectional curvature.
\end{enumerate}
\end{thm}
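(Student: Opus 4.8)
The plan is to assemble the four assertions directly from the structural and curvature data computed above, since each one is a read-off from an already-established identity rather than a fresh calculation.

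For item (1), I would start from the decomposition \eqref{S1-F1+F11}, namely $F = F^1 + F^{11}$. By the characterisation of the classes recalled in Section~\ref{sec-mfds} (the manifold lies in $\F_i \oplus \F_j \oplus \cdots$ if and only if $F = F^i + F^j + \cdots$), this places $(S_1,\f,\xi,\eta,g)$ in $\F_1 \oplus \F_{11}$. To rule out membership in a single basic class, I would invoke the non-vanishing remark following \eqref{S1-Fijk59}: since $F^1_{211} = \frac{2}{r}\tan u^1 \neq 0$ and $F^{11}_{002} = \frac{1}{r}\cot u^1 \neq 0$ for every admissible value of $u^1$, neither component is identically zero. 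Hence $F \neq F^1$ and $F \neq F^{11}$, so the manifold belongs to neither $\F_1$ nor $\F_{11}$ alone.

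For items (2) and (3), I would simply read off the scalar invariants recorded in \eqref{S1-rhotau}: the value $\tau = \frac{6}{r^2} > 0$ gives positive scalar curvature, and $\tau^{*} = 0$ is precisely the definition of being $*$-scalar flat. For item (4), I would use the closed form \eqref{Rpi1}, $R = -\frac{1}{2r^2}(g\ow g)$. Substituting a curvature tensor proportional to $g\ow g$ into the sectional-curvature formula \eqref{sect} makes the quotient independent of the chosen non-degenerate $2$-plane, and the constant factor yields $k = \frac{1}{r^2}$ for every such plane, in agreement with the basic values \eqref{S1-kij}. Thus the manifold is a space form of positive constant sectional curvature $\frac{1}{r^2}$.

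Since every ingredient has already been derived, there is no genuine computational obstacle here; the proof is an assembly of prior equations. The only point that warrants care is the sign bookkeeping in item (4): one must check that the factor $-\frac{1}{2r^2}$ in \eqref{Rpi1}, combined with the leading minus sign and the factor $2$ in the definition \eqref{sect}, indeed produces $-2\bigl(-\frac{1}{2r^2}\bigr) = \frac{1}{r^2} > 0$ rather than its negative. This is exactly why the explicit comparison against the precomputed values \eqref{S1-kij} is worth stating, as it confirms both the constancy and the positive sign simultaneously.
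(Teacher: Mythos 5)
Your proposal is correct and follows essentially the same route as the paper's own (very terse) proof: item (1) from \eqref{S1-F1+F11} together with the non-vanishing of the components in \eqref{S1-Fijk59}, items (2)--(3) read off from \eqref{S1-rhotau}, and item (4) from \eqref{Rpi1} combined with \eqref{sect}. Your explicit sign check $-2\bigl(-\tfrac{1}{2r^2}\bigr)=\tfrac{1}{r^2}>0$ is a useful elaboration but not a departure from the paper's argument.
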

\begin{proof}
We establish the truthfulness of assertion (1)  using \eqref{S1-F1+F11} and \eqref{S1-Fijk59}.
Conclusions (2) and (3) are consequences of \eqref{S1-rhotau}, whereas (4)  follows from \eqref{Rpi1}.
\end{proof}

\section{\textbf{A hypersphere with the studied structure in Minkowski 4-space}}\label{sec-R^4_1}
 \vglue-10pt
 \indent

In this section we consider the pseudo-Euclidean space $\E^{4}_{1}$,
\ie{}
the real 4-space $\R^4$ equipped with  the following Lorentzian inner product 
\begin{equation}\label{inprod3-1}
  \langle x,y\rangle=x^1y^1+x^2y^2+x^3y^3-x^4y^4
\end{equation}
for arbitrary vectors $x(x^1,x^2,x^3,x^4)$ and $y(y^1,y^2,y^3,y^4)$ in $\R^{4}$.

In a similar manner as in the previous section, we define the following hypersphere $S_2$ in $\E^{4}_1$
at the origin with real radius $r$,
\[
S_2:\quad
\langle z,z\rangle=-r^2.
\]
It has the following parametrization
\begin{equation*}
{z}(r\sinh u^1\cos u^2, r\sinh u^1\sin u^2, r\cosh u^1\sinh u^3,  r\cosh u^1\cosh u^3),
\end{equation*}
where $u^1,u^2,u^3$ are real parameters such as $u^1\in (-\infty;0)\cup(0;+\infty)$, $u^2 \in [0;2\pi)$,
$u^3\in (-\infty;+\infty)$.

Therefore, for the local basic vectors $\partial_i=\frac{\partial z}{\partial{u^i}}$, $i\in\{1,2,3\}$, we obtain the following
\begin{equation*}
\begin{array}{l}
  \langle\partial_1,\partial_1\rangle=r^2, \quad
  \langle\partial_2,\partial_2\rangle=r^2 \sinh^2u^1, \quad
    \langle\partial_3,\partial_3\rangle=r^2 \cosh^2u^1,\\[4pt]
  \langle\partial_i,\partial_j\rangle=0, \; i\neq j.
\end{array}
\end{equation*}

Then, we substitute  $e_{i-1}=\frac{1}{\sqrt{\left|\langle\partial_i,\partial_i\rangle\right|}}\partial_i$ and get the orthonormal basis $\{e_i\}$, $i \in \{0,1,2\}$, as follows
\begin{equation*}\label{S2-eidi}
\begin{array}{l}
{e_0}=\frac{1}{r}\partial_1, \qquad
{e_1}=\frac{1}{r\sinh u^1}\partial_2, \qquad
{e_2}=\frac{1}{r\cosh u^1}\partial_3.
\end{array}
\end{equation*}

In the same way as in the previous section, here we equip $S_2$ with an almost paracontact almost paracomplex structure and a Riemannian metric defined by  \eqref{strL} and \eqref{gL}, respectively.
Thus, for the obtained manifold $(S_2,\f,\xi,\eta,g)$, we have the following
\begin{prop}
The manifold $(S_2,\f,\xi,\eta,g)$ is a 3-dimensional almost paracontact almost paracomplex Riemannian manifold.
\end{prop}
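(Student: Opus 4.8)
The plan is to mirror the argument used for $(S_1,\f,\xi,\eta,g)$ in the previous section, the only genuinely new point being the verification that the metric induced on $S_2$ is Riemannian despite the ambient space $\E^4_1$ being Lorentzian. First I would confirm that $S_2$ is a smooth $3$-dimensional submanifold of $\E^4_1$: the defining equation $\langle z,z\rangle=-r^2$ exhibits $S_2$ as a regular level set of the function $z\mapsto\langle z,z\rangle$, whose differential is nonvanishing on $S_2$, so the given parametrization by $(u^1,u^2,u^3)$ furnishes a chart and the vectors $\partial_1,\partial_2,\partial_3$ span $T_pS_2$ at each $p\in S_2$.

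The crucial step is to check that $g$, the restriction of $\langle\cdot,\cdot\rangle$ from \eqref{inprod3-1} to $S_2$, is positive definite. The clean way to see this is to observe that on $S_2$ the position vector $z$ satisfies $\langle z,z\rangle=-r^2<0$, so $z$ is a time-like normal to $S_2$; consequently its orthogonal complement $T_pS_2$ is space-like, that is, $g$ is Riemannian. Concretely, this is confirmed by the three displayed inner products $\langle\partial_1,\partial_1\rangle=r^2$, $\langle\partial_2,\partial_2\rangle=r^2\sinh^2u^1$ and $\langle\partial_3,\partial_3\rangle=r^2\cosh^2u^1$, all of which are strictly positive on the stated parameter domain (recall that $u^1\neq0$ guarantees $\sinh^2 u^1>0$). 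Normalizing by $e_{i-1}=\tfrac{1}{\sqrt{\left|\langle\partial_i,\partial_i\rangle\right|}}\partial_i$ then yields an orthonormal frame $\{e_0,e_1,e_2\}$, so that \eqref{gL} holds at every point.

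Finally, since $(\f,\xi,\eta)$ is imposed on $S_2$ through the $\f$-basis relations \eqref{strL} exactly as in the Euclidean case, the defining axioms of an almost paracontact almost paracomplex structure reduce to purely algebraic identities on the orthonormal basis $\{e_0,e_1,e_2\}$. I would verify them directly on the frame: $\f^2=\I-\eta\otimes\xi$, $\eta(\xi)=1$, $\eta\circ\f=0$, $\f\xi=0$ and $\tr\f=0$ are all immediate from \eqref{strL}, while the compatibility $g(\f x,\f y)=g(x,y)-\eta(x)\eta(y)$ follows by bilinearity from \eqref{strL} together with the orthonormality \eqref{gL}. I expect no serious obstacle beyond the signature check: once the induced metric is known to be positive definite, the remaining verifications are formally identical to those already carried out for $(S_1,\f,\xi,\eta,g)$, and the parameter restriction $u^1\neq0$ is precisely what keeps the frame $\{e_0,e_1,e_2\}$ well defined.
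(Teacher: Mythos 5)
Your proposal is correct and follows essentially the same route as the paper, which establishes the proposition implicitly through the construction itself: the displayed positivity of $\langle\partial_i,\partial_i\rangle$ on the parameter domain, the normalization to the orthonormal frame $\{e_0,e_1,e_2\}$ satisfying \eqref{gL}, and the imposition of $(\f,\xi,\eta)$ via \eqref{strL}. Your added remarks on the time-like normal and the regularity of the level set only make explicit what the paper leaves tacit.
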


By similar considerations  as for $S_1$, we obtain the following:
\begin{equation}\label{S2-com}
\begin{array}{c}
[e_0,e_1]=-\frac{1}{r}\coth u^1 {e_1}, \qquad [e_0,e_2]=-\frac{1}{r}\tanh u^1 {e_2}, \qquad  [e_1,e_2]=0.
\end{array}
\end{equation}
Taking into account \eqref{Koz} and \eqref{S2-com} we get:
\begin{equation}\label{S2-nij}
\begin{array}{ll}
\n_{e_1}e_0=\frac{1}{r}\coth u^1 {e_1}, \qquad &\n_{e_2}e_0=\frac{1}{r}\tanh u^1 {e_2},\\[4pt]
\n_{e_1}e_1=-\frac{1}{r}\coth u^1 {e_0}, \qquad &\n_{e_2}e_2=-\frac{1}{r}\tanh u^1 {e_0}.
\end{array}
\end{equation}
Bearing in mind \eqref{strL}, \eqref{gL} and  \eqref{S2-nij}, we compute the components $F_{ijk}$ of $F$. The
non-zero of them are:
\begin{equation}\label{S2-Fijk}
\begin{array}{c}
F_{102}=F_{120}=-\frac{1}{r}\coth u^1, \qquad  F_{201}=F_{210}=-\frac{1}{r}\tanh u^1.
\end{array}
\end{equation}
Then, applying  \eqref{N}, \eqref{HN} and \eqref{S2-Fijk}, we calculate the components $N_{ijk}$ and $\widehat{N}_{ijk}$  as follows:
\begin{equation*}
\begin{array}{l}
N_{101}=-N_{011}=N_{022}=-N_{202}=\frac{2}{r\sinh 2u^1}, \\[4pt]
\widehat{N}_{101}=\widehat{N}_{011}=-\widehat{N}_{202}=-\widehat{N}_{022}=\frac{2}{r\sinh 2u^1}, \\[4pt]
\widehat{N}_{110}=\widehat{N}_{220}=-\frac{2}{r}(\coth u^1 + \tanh u^1).
\end{array}
\end{equation*}
By vitue of   \eqref{Fi3} and \eqref{S2-Fijk}, we establish the following equality
\begin{equation}\label{S2-F5+F9}
F(x,y,z)=(F^5+F^9)(x,y,z),
\end{equation}
where $F^5$ and $F^9$
are the components of $F$ corresponding to the basic classes $\F_5$ and $\F_9$.
The non-zero components of $F^5$ and $F^9$ with respect to the basis $\{e_0,e_1,e_2\}$ are the following
\begin{equation}\label{S2-Fijk59}
\begin{array}{l}
F^5_{102}=F^5_{120}=F^5_{201}=F^5_{210}=\frac{1}{2}\ta^*_0=-\frac{1}{2r}(\coth u^1+\tanh u^1),\\[4pt]
F^9_{102}=F^9_{120}=-F^9_{201}=-F^9_{210}=\mu=\frac{1}{2r}(\tanh u^1-\coth u^1).
\end{array}
\end{equation}
Taking into account \eqref{n_eta_F}, \eqref{S2-F5+F9} and \eqref{S2-Fijk59},  we get
\begin{equation}\label{S2-d-eta}
\D\eta=0, \qquad \n_\xi \xi=0,
\end{equation}
 which support the obtained results in \cite{ManVes18}.

Bearing in mind \eqref{gL}, \eqref{S2-com} and \eqref{S2-nij}, we calculate the components $R_{ijkl}$ of $R$. The
non-zero of them are determined by the basic symmetries of $R$ and the following
\begin{equation}\label{S2-Rijkl}
\begin{array}{c}
R_{0101}=R_{0202}=R_{1212}=\frac{1}{r^2}.
\end{array}
\end{equation}
Using \eqref{gL}, \eqref{rhotau} and \eqref{S2-Rijkl}, we obtain the basic components  $\rho_{jk}$ and $\rho^*_{jk}$  as well as the values of
 $\tau$ and  $\tau^*$:
\begin{equation}\label{S2-rhotau}
\begin{array}{ll}
\rho_{00}=\rho_{11}=\rho_{22}=-\frac{2}{r^2}, \qquad & \rho^{*}_{12}=\rho^{*}_{21}=\frac{1}{r^2},\\
\tau=-\frac{6}{r^2}, \qquad & \tau^{*}=0.
\end{array}
\end{equation}
Bearing in mind \eqref{gL}, \eqref{sect} and \eqref{S2-Rijkl}, we compute the basic sectional curvatures $k_{ij}$
with respect to the basis $\{e_0,e_1,e_2\}$
as follows
\begin{equation}\label{S2-kij}
k_{01}=k_{02}=k_{12}=-\frac{1}{r^2}.
\end{equation}

Bearing in mind  \eqref{gL}, \eqref{S2-Rijkl} and \eqref{S2-kij}, we get the  form of the curvature tensor in the following way
\begin{equation}\label{Rpi2}
R(x,y,z,w)=
\frac{1}{2r^2}(g\ow g)(x,y,z,w).
\end{equation}

By virtue the obtained results from the above, we obtain the following
\begin{thm}
Let $(S_2,\f,\xi,\eta,g)$ be the time-like sphere in the Minkowski 4-space $\E^{4}_1$ equipped  with an almost paracontact almost paracomplex structure and a Riemannian metric defined by \eqref{inprod3-1}, \eqref{strL} and \eqref{gL}.
Then, the manifold $(S_2,\f,\xi,\eta,g)$ has the following properties:
\begin{enumerate}
\item
it is in the class $\F_5\oplus\F_9$
but does not belong to either $\F_5$ or $\F_9$;
\item
it has a closed 1-form $\eta$ and geodesic integral curves of $\xi$;
\item
it has a negative scalar curvature;
\item
it is $*$-scalar flat;
\item
it is a space-form of negative constant sectional curvature.
\end{enumerate}
\end{thm}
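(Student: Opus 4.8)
The plan is to read off all five assertions from the structural and curvature data already assembled in \eqref{S2-F5+F9}--\eqref{Rpi2}, mirroring the argument used for $(S_1,\f,\xi,\eta,g)$ in the previous section. Once the components $F_{ijk}$, the class decomposition $F=F^5+F^9$, the curvature components $R_{ijk\ell}$, and the derived quantities $\rho$, $\tau$, $\rho^*$, $\tau^*$ and $k_{ij}$ are in hand, each claim reduces to a direct verification against a single displayed equation; no new computation beyond sign-checking is needed.

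For assertion (1), the containment $(S_2,\f,\xi,\eta,g)\in\F_5\oplus\F_9$ is immediate from \eqref{S2-F5+F9}, since $F$ equals the sum of its $\F_5$- and $\F_9$-components while all other components vanish. To exclude membership in either $\F_5$ or $\F_9$ alone, I would show that both components are genuinely nonzero throughout the parameter domain. By \eqref{S2-Fijk59}, the $\F_5$-component is governed by $\ta^*_0=-\frac{1}{r}(\coth u^1+\tanh u^1)$ and the $\F_9$-component by $\mu=\frac{1}{2r}(\tanh u^1-\coth u^1)$. Since $\coth u^1$ and $\tanh u^1$ share the same sign on each connected piece of the domain $u^1\neq0$, their sum never vanishes, so $F^5\neq0$; and since $|\tanh u^1|<1<|\coth u^1|$ there, their difference never vanishes, so $F^9\neq0$. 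Hence $F$ lies in the proper direct sum and in neither summand.

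Assertions (2)--(4) follow by inspection. Claim (2) is exactly \eqref{S2-d-eta}: $\D\eta=0$ says $\eta$ is closed, while $\n_\xi\xi=0$ says the integral curves of $\xi$ are geodesics. Claims (3) and (4) read off the scalar quantities in \eqref{S2-rhotau}: $\tau=-\frac{6}{r^2}<0$ gives negative scalar curvature, and $\tau^*=0$ gives $*$-scalar flatness.

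Assertion (5) follows from \eqref{Rpi2}, which exhibits $R=\frac{1}{2r^2}\,g\ow g$. Substituting this into the sectional-curvature formula \eqref{sect}, the factor $(g\ow g)(x,y,y,x)$ cancels, so every non-degenerate $2$-plane has the same sectional curvature $k=-\frac{1}{r^2}<0$; thus the manifold is a space form of negative constant sectional curvature. I expect the only genuine subtlety to be the sign bookkeeping inherited from the Lorentzian ambient: one should confirm that the induced metric on the time-like hypersphere $\langle z,z\rangle=-r^2$ is in fact Riemannian, which holds because the tangential inner products $\langle\partial_i,\partial_i\rangle$ are all positive, and that this pseudo-sphere consequently carries a hyperbolic-type, negatively curved geometry, in contrast with the round sphere $S_1$ of positive curvature. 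The non-vanishing verifications in step (1) are the only other place demanding care beyond routine substitution.
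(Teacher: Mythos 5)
Your proposal is correct and follows the same route as the paper, which simply cites \eqref{S2-F5+F9} and \eqref{S2-Fijk59} for assertion (1) and reads (2)--(5) off from \eqref{S2-d-eta}, \eqref{S2-rhotau} and \eqref{Rpi2}. Your additional verification that $\ta^*_0$ and $\mu$ are nowhere zero on the domain $u^1\neq 0$ is exactly the detail the paper leaves implicit, and it is argued correctly.
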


\begin{proof}
We establish the truthfulness of assertion (1) using   \eqref{S2-F5+F9} and \eqref{S2-Fijk59}.
Statements (2), (3)--(4), (5) follow directly from \eqref{S2-d-eta},  \eqref{S2-rhotau}, \eqref{Rpi2}, respectively.
\end{proof}

\bigskip

\subsection*{Acknowledgment}
The authors were supported by project
of the Scientific Research Fund,
University of Plovdiv Paisii Hilendarski, Bulgaria.

 \end{document}